\newtheorem{thm}{Theorem}[section]
\newtheorem{pro}[thm]{Proposition}
\newtheorem{DEF}[thm]{Definition}
\numberwithin{equation}{section}
\begin{document}
\title[]{SOME INTERSECTIONS OF LORENTZ SPACES}%
\author{ F. ABTAHI, H. G. Amini, H. A. Lotfi and A. Rejali}%

\thanks{}%

\keywords{$L^p-$spaces, Lorentz spaces}
\subjclass[2000]{Primary 43A15, Secondary 43A20}

\date{}%


\maketitle

\begin{abstract}
Let $(X,\mu)$ be a measure space. For $p,q\in (0,\infty]$ and
arbitrary subsets $P,Q$ of $(0,\infty]$, we introduce and
characterize some intersections of Lorentz spaces, denoted by
$IL_{p,Q}(X,\mu)$, $IL_{J,q}(X,\mu)$ and $IL_{J,Q}(X,\mu)$.
\end{abstract}

\section{\bf Introduction}

Let $(X,\mu)$ be a measure space. For $0<p\leq\infty$, the space
$L^p(X,\mu)$ is the usual Lebesgue space as defined in \cite{gr}
and \cite{r}. Let us remark that for $1\leq p<\infty$
$$\|f\|_p:=\left(\int_X|f(x)|^pd\mu(x)\right)^{1/p}$$ defines a
norm on $L^p(X,\mu)$ such that $(L^p(X,\mu),\|.\|_p)$ is a Banach
space. Also for $0<p<1$,
$$
\|f\|_p:=\int_X|f(x)|^pd\mu(x)
$$
defines a quasi norm on $L^p(X,\mu)$ such that
$(L^p(X,\mu),\|.\|_p)$ is a complete metric space. Moreover for
$p=\infty$,
$$
\|f\|_{\infty}=\inf\{B>0: \mu(\{x\in X: |f(x)|>B\})=0\}
$$
defines a norm on $L^{\infty}(X,\mu)$ such that
$(L^{\infty}(X,\mu),\|.\|_{\infty})$ is a Banach space. In
\cite{aalr1}, we considered an arbitrary intersection of the
$L^p-$spaces denoted by $\bigcap_{p\in J}L^p(G)$, where $G$ is a
locally compact group with a left Haar measure $\lambda$ and
$J\subseteq [1,\infty]$. Then we introduced the subspace $IL_J(G)$
of $\bigcap_{p\in J}L^p(G)$ as
$$
IL_{J}(G)=\{f\in\bigcap_{p\in J} L^p(G): \|f\|_J=\sup_{p\in
J}\Vert f\Vert_{p}<\infty\},
$$
and studied $IL_J(G)$ as a Banach algebra under convolution
product, for the case where $1\in J$. Also in \cite{aalr2}, we
generalized the results of \cite{aalr1} to the weighted case. In
fact for an arbitrary family $\Omega$ of the weight functions on
$G$ and $1\leq p<\infty$, we introduced the subspace
$IL_p(G,\Omega)$ of the locally convex space
$L^p(G,\Omega)=\bigcap_{\omega\in\Omega}L^p(G,\omega)$. Moreover,
we provided some sufficient conditions on $G$ and also $\Omega$ to
construct a norm on $IL_p(G,\Omega)$. The fourth section of
\cite{aalr2} has been assigned to some intersections of Lorentz
spaces. Indeed for the case where $p$ is fixed and $q$ runs
through $J\subseteq (0,\infty)$, we introduced $IL_{p,J}(G)$ as a
subspace of $\cap_{q\in J}L_{p,q}(G)$, where $L_{p,q}(G)$ is the
Lorentz space with indices $p$ and $q$. As the main result, we
proved that $IL_{p,J}(G)=L_{p,m_J}(G)$, in the case where
$m_J=\inf\{q: q\in J\}$ is positive.\\

In the present work, we continue our study concerning the
intersections of Lorentz spaces on the measure space $(X,\mu)$, to
complete our results in this direction. Precisely, we verify most
the results given in the second and third sections of
\cite{aalr1}, for Lorentz spaces.

\section{\bf Preliminaries}

In this section, we give some preliminaries and definitions which
will be used throughout the paper. We refer to \cite{gr}, as a
good
introductory book.\\

Let $(X,\mu)$ be a measure space and $f$ be a complex valued
measurable function on $X$. For each $\alpha>0$, let
$$
d_f(\alpha)=\mu(\{x\in X: |f(x)|>\alpha \}).
$$
The decreasing rearrangement of $f$ is the function
$f^*:[0,\infty)\rightarrow [0,\infty]$ defined by
$$
f^*(t)=\inf\{s>0: d_f(s)\leq t\}.
$$
We adopt the convention $\inf\emptyset=\infty$, thus having
$f^*(t)=\infty$ whenever $d_f(\alpha)>t$ for all $\alpha\geq 0$.
For $0<p\leq\infty$ and $0<q<\infty$, define
\begin{equation}\label{e1}
\|f\|_{L_{p,q}}
=\left(\int_0^{\infty}\left(t^{\frac{1}{p}}f^*(t)\right)^q\frac{dt}{t}\right)^{1/q},
\end{equation}
where $dt$ is the Lebesgue measure. In the case where $q=\infty$,
define
\begin{equation}\label{e3}
\|f\|_{L_{p,\infty}}=\sup_{t>0} t^{\frac{1}{p}}f^*(t).
\end{equation}
The set of all $f$ with $\|f\|_{p,q}<\infty$ is denoted by
$L_{p,q}(X,\mu)$ and is called the Lorentz space with indices $p$
and $q$. As in $L^p-$spaces, two functions in $L_{p,q}(X,\mu)$ are
considered equal if they are equal $\mu-$almost everywhere on $X$.
It is worth noting that by \cite[Proposition 1.4.5]{gr} for each
$0<p<\infty$ we have
\begin{equation}\label{e2}
\int_X|f(x)|^pd\mu(x)=\int_0^{\infty}f^*(t)^pdt.
\end{equation}
It follows that $L_{p,p}(X,\mu)=L^p(X,\mu)$. Furthermore by the
definition given in equation (\ref{e3}), one can observe that
$L_{\infty,\infty}(X,\mu)=L^{\infty}(X,\mu)$. Note that in the
case where $p=\infty$, one can conclude that the only simple
function with finite $\|.\|_{L_{\infty,q}}$ norm is the zero
function. For this reason, $L_{\infty,q}(X,\mu)=\{0\}$, for every
$0<q<\infty$; see \cite[page 49]{gr}.\\

In \cite{aalr2}, for locally compact group $G$ and $0<p<\infty$
and also an arbitrary subset $Q$ of $(0,\infty)$ with
$$m_Q=\inf\{q: q\in Q\}>0,$$ we introduced $IL_{p,Q}(G)$ as a subset
of $\cap_{q\in Q}L_{p,q}(G)$ by
\begin{equation}\label{e4}
IL_{p,Q}(G)=\{f\in \bigcap_{q\in Q}L_{p,q}(G):
\|f\|_{L_{p,Q}}=\sup_{q\in Q}\|f\|_{L_{p,q}}<\infty\}.
\end{equation}
As the main result of the third section in \cite{aalr2}, we proved
the following theorem.

\begin{thm}\cite[Theorem 12]{aalr2}
Let $G$ be a locally compact group, $0<p<\infty$ and $Q$ be an
arbitrary subset of $(0,\infty)$ such that $m_Q>0$. Then
$IL_{p,Q}(G)=L_{p,m_Q}(G)$. Moreover, for each $f\in
L_{p,m_Q}(G)$,
\begin{equation}\label{e8}
\|f\|_{L_{p,m_Q}}\leq\|f\|_{L_{p,Q}}\leq\max\left\{1,
\left(\frac{m_Q}{p}\right)^{1/{m_Q}}\right\}\|f\|_{L_{p,m_Q}}.
\end{equation}
\end{thm}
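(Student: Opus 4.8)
The plan is to reduce the whole statement to a single comparison inequality between Lorentz (quasi-)norms sharing the first index $p$: for $0<a<b\le\infty$,
\begin{equation*}
\|f\|_{L_{p,b}}\le\left(\frac{a}{p}\right)^{\frac{1}{a}-\frac{1}{b}}\|f\|_{L_{p,a}}.
\end{equation*}
The group structure plays no role here, so the argument is purely measure-theoretic on $(G,\lambda)$. Once the displayed inequality is available, the theorem follows by taking $a=m_Q$ and letting $b=q$ range over $Q$, together with a limiting argument at the endpoint $m_Q$.

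To establish the comparison inequality I would first prove the embedding into the weak space $L_{p,\infty}$. Since $f^*$ is non-increasing, $f^*(s)\ge f^*(t)$ on $(0,t)$, whence
\begin{equation*}
\|f\|_{L_{p,a}}^{a}\ge\int_0^{t}\bigl(s^{1/p}f^*(s)\bigr)^{a}\frac{ds}{s}\ge f^*(t)^{a}\int_0^{t}s^{a/p}\frac{ds}{s}=\frac{p}{a}\,t^{a/p}f^*(t)^{a},
\end{equation*}
giving $t^{1/p}f^*(t)\le(a/p)^{1/a}\|f\|_{L_{p,a}}$ and hence $\|f\|_{L_{p,\infty}}\le(a/p)^{1/a}\|f\|_{L_{p,a}}$. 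I would then bootstrap to finite $b$ by writing $\bigl(t^{1/p}f^*(t)\bigr)^{b}=\bigl(t^{1/p}f^*(t)\bigr)^{b-a}\bigl(t^{1/p}f^*(t)\bigr)^{a}$, bounding the first factor by $\|f\|_{L_{p,\infty}}^{b-a}$ and integrating the second against $dt/t$; this yields $\|f\|_{L_{p,b}}^{b}\le(a/p)^{(b-a)/a}\|f\|_{L_{p,a}}^{b}$, i.e. the displayed inequality after taking $b$-th roots.

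With $a=m_Q$ fixed, for every $q\in Q$ (so $q\ge m_Q$) the exponent $\frac{1}{m_Q}-\frac{1}{q}$ lies in $[0,\frac{1}{m_Q})$; maximizing $(m_Q/p)^{1/m_Q-1/q}$ over this range gives $1$ when $m_Q\le p$ and $(m_Q/p)^{1/m_Q}$ when $m_Q>p$, that is exactly $\max\{1,(m_Q/p)^{1/m_Q}\}$. Hence $\|f\|_{L_{p,q}}\le\max\{1,(m_Q/p)^{1/m_Q}\}\|f\|_{L_{p,m_Q}}$ for every $q\in Q$, and taking the supremum over $q$ proves the right-hand inequality in (\ref{e8}); in particular every $f\in L_{p,m_Q}(G)$ lies in $IL_{p,Q}(G)$, giving the inclusion $L_{p,m_Q}(G)\subseteq IL_{p,Q}(G)$.

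The remaining, and most delicate, point is the left-hand inequality $\|f\|_{L_{p,m_Q}}\le\|f\|_{L_{p,Q}}$, which simultaneously yields $IL_{p,Q}(G)\subseteq L_{p,m_Q}(G)$. If $m_Q\in Q$ this is immediate, since $\|f\|_{L_{p,m_Q}}$ is then one of the terms of the supremum defining $\|f\|_{L_{p,Q}}$. The genuine obstacle is the case $m_Q\notin Q$, where the infimum is not attained: here I would choose $q_n\in Q$ with $q_n\downarrow m_Q$ and apply Fatou's lemma to the integrands $\bigl(t^{1/p}f^*(t)\bigr)^{q_n}$, which converge pointwise to $\bigl(t^{1/p}f^*(t)\bigr)^{m_Q}$, obtaining
\begin{equation*}
\|f\|_{L_{p,m_Q}}^{m_Q}\le\liminf_{n}\|f\|_{L_{p,q_n}}^{q_n}.
\end{equation*}
Writing $M=\|f\|_{L_{p,Q}}$ and using $\|f\|_{L_{p,q_n}}\le M$ together with the continuity of $x\mapsto M^{x}$, the right-hand side is at most $\lim_n M^{q_n}=M^{m_Q}$, so $\|f\|_{L_{p,m_Q}}\le M$; the degenerate cases $M=0$ and $M=\infty$ are trivial. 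Combining the two inequalities gives both the norm estimate (\ref{e8}) and the set equality $IL_{p,Q}(G)=L_{p,m_Q}(G)$.
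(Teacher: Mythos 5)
Your proposal is correct and follows essentially the same route as the paper's (i.e.\ the proof of \cite[Theorem 12]{aalr2}, echoed here in Propositions \ref{p7} and \ref{p15}): the inclusion $L_{p,m_Q}\subseteq IL_{p,Q}$ via the second-index comparison $\|f\|_{L_{p,q}}\leq\max\{1,(m_Q/p)^{1/m_Q}\}\|f\|_{L_{p,m_Q}}$, and the reverse inclusion together with the left-hand estimate via Fatou's lemma along a sequence $q_n\downarrow m_Q$ when $m_Q\notin Q$. The only difference is cosmetic: you derive the comparison inequality from scratch (through the $L_{p,\infty}$ embedding and a bootstrap), whereas the paper simply cites \cite[Proposition 1.4.10]{gr}.
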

Note that in the definition of $IL_{p,Q}(G)$ given in (\ref{e4}),
one can replace $G$ by an arbitrary measure space $(X,\mu)$.
Precisely if let
\begin{equation}\label{e7}
IL_{p,Q}(X,\mu)=\{f\in \bigcap_{q\in Q}L_{p,q}(X,\mu):
\|f\|_{L_{p,Q}}=\sup_{q\in Q}\|f\|_{L_{p,q}}<\infty\},
\end{equation}
then $IL_{p,Q}(X,\mu)=L_{p,m_Q}(X,\mu)$. Moreover for each $f\in
L_{p,m_Q}(X,\mu)$, \eqref{e8} is satisfied. Furthermore, Theorem
\cite[Theorem 12]{aalr2} is also valid for $IL_{p,Q}(X,\mu)$. In
the present work, in a similar way, we introduce and characterize
the spaces $IL_{J,q}(X,\mu)$ and also $IL_{J,Q}(X,\mu)$, as other
intersections of Lorentz spaces. Moreover we obtain some results
about Lorentz space related to a Banach spaces $E$, introduced in
\cite{kat}.

\section{\bf Main Results}

At the beginning of the present section we recall \cite[Exercise
1.4.2]{gr}, which will be used several times in our further
arguments. A simple proof is given here.

\begin{pro}\label{p1}
Let $(X,\mu)$ be a measure space and $0<p_1<p_2 \leq \infty$. Then
$$
L_{p_1,\infty}(X,\mu)\bigcap
L_{p_2,\infty}(X,\mu)\subseteq\bigcap_{p_1<p<p_2 ,
0<s\leq\infty}L_{p,s}(X,\mu).
$$
\end{pro}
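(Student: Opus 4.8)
The plan is to reduce everything to two pointwise bounds on the decreasing rearrangement $f^*$ and then split a single integral at $t=1$. So let $f\in L_{p_1,\infty}(X,\mu)\cap L_{p_2,\infty}(X,\mu)$ and write $C_1=\|f\|_{L_{p_1,\infty}}$ and $C_2=\|f\|_{L_{p_2,\infty}}$, both finite. Directly from \eqref{e3} (with the convention $1/p_2=0$ when $p_2=\infty$) these finiteness assumptions are equivalent to the pointwise estimates
$$
f^*(t)\le C_1\,t^{-1/p_1}\quad\text{and}\quad f^*(t)\le C_2\,t^{-1/p_2}\qquad(t>0).
$$
Now I would fix any $p$ with $p_1<p<p_2$ and any $s$ with $0<s\le\infty$; the whole point is the chain $1/p_1>1/p>1/p_2\ge 0$, which dictates which of the two bounds to use on which part of the $t$-axis.

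Next I would treat the case $0<s<\infty$ by splitting the defining integral \eqref{e1} as $\int_0^\infty=\int_0^1+\int_1^\infty$. On $(0,1]$ I use the second (the $p_2$) bound, so that the integrand is dominated by $C_2^{\,s}\,t^{(1/p-1/p_2)s-1}$; since $1/p-1/p_2>0$ the exponent exceeds $-1$ and the integral over $(0,1]$ is finite. On $[1,\infty)$ I use the first (the $p_1$) bound, giving an integrand dominated by $C_1^{\,s}\,t^{(1/p-1/p_1)s-1}$; since $1/p-1/p_1<0$ the exponent is strictly below $-1$ and the integral over $[1,\infty)$ converges. Adding the two pieces shows $\|f\|_{L_{p,s}}<\infty$.

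For the remaining case $s=\infty$ the same split applies to the supremum in \eqref{e3}: for $0<t\le 1$ the $p_2$ bound gives $t^{1/p}f^*(t)\le C_2\,t^{1/p-1/p_2}\le C_2$, while for $t>1$ the $p_1$ bound gives $t^{1/p}f^*(t)\le C_1\,t^{1/p-1/p_1}\le C_1$, so that $\|f\|_{L_{p,\infty}}\le\max\{C_1,C_2\}<\infty$. Since $p\in(p_1,p_2)$ and $s\in(0,\infty]$ were arbitrary, $f$ lies in every $L_{p,s}(X,\mu)$ in the stated range, which is exactly the claimed inclusion.

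I do not expect a genuine obstacle here: the argument is essentially a one-line interpolation of the two weak-type bounds, and the only points requiring care are the edge cases $p_2=\infty$ (where $1/p_2=0$ and the $p_2$ bound reduces to $f^*\le\|f\|_\infty$) and $s=\infty$, together with keeping the signs of the exponents $1/p-1/p_i$ straight so that each tail of the integral actually converges.
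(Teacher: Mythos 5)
Your proof is correct, and while it rests on the same basic interpolation idea as the paper's --- use each weak-type bound on the part of the domain where it is favorable --- the execution is genuinely different. The paper works with the distribution function $d_f$: it rewrites the Lorentz norm as $\|f\|_{L_{p,s}}=\left(p\int_0^\infty\left(d_f(\alpha)^{1/p}\alpha\right)^s\,\frac{d\alpha}{\alpha}\right)^{1/s}$, bounds $d_f$ by the minimum of the two weak-type estimates, and splits the $\alpha$-integral at the optimal crossover point $B=\left(\|f\|_{L_{p_2,\infty}}^{p_2}/\|f\|_{L_{p_1,\infty}}^{p_1}\right)^{1/(p_2-p_1)}$; this forces separate treatment of the degenerate case $\|f\|_{L_{p_1,\infty}}=0$ (where $B$ is undefined) and of $p_2=\infty$, and for $s=\infty$ the paper abandons the computation entirely and cites \cite[Proposition 1.1.14]{gr} together with $L^r(X,\mu)\subseteq L_{r,\infty}(X,\mu)$. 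You instead work directly with the rearrangement $f^*$ and the defining formulas \eqref{e1} and \eqref{e3}, convert the two hypotheses into the pointwise bounds $f^*(t)\le C_i t^{-1/p_i}$, and split at the fixed point $t=1$; this handles $s=\infty$, $p_2=\infty$ (via $1/p_2=0$) and $f=0$ uniformly, with no external citation and no case analysis beyond the split itself, and your sign checks on the exponents $(1/p-1/p_i)s-1$ are right. What the paper's extra work buys is quantitative: the optimal split at $B$ yields the scale-invariant multiplicative estimate $\|f\|_{L_{p,s}}\le C\,\|f\|_{L_{p_1,\infty}}^{1-\theta}\|f\|_{L_{p_2,\infty}}^{\theta}$ (the standard interpolation inequality, with exponents summing to $1$), whereas your split at $t=1$ gives only $\|f\|_{L_{p,s}}\le C\max\{\|f\|_{L_{p_1,\infty}},\|f\|_{L_{p_2,\infty}}\}$, which is weaker but entirely sufficient for the set inclusion actually being claimed.
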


\begin{proof}
Let $f\in L_{p_1,\infty}(X,\mu)\cap L_{p_2,\infty}(X,\mu)$. If
$\|f\|_{L_{p_1,\infty}}=0$, one can readily obtained that $f\in
L_{p,s}(X,\mu)$, for all $p_1<p<p_2$ and $0<s\leq\infty$. Now let
$\|f\|_{L_{p_1,\infty}}\neq 0$ and first suppose that
$p_2<\infty$. We show that $f\in L_{p,s}(X,\mu)$, for all
$p_1<p<p_2$ and $0<s<\infty$. It is clear that for each $\alpha>0$
\begin{equation}\label{e6}
d_f(\alpha)\leq \min
\left(\frac{\|f\|_{L_{p_1,\infty}}^{p_1}}{\alpha^{p_1}} ,
\frac{\|f\|_{L_{p_2,\infty}}^{p_2}}{\alpha^{p_2}}\right).
\end{equation}
Set
$$
B=\left(
\frac{\|f\|_{L_{p_2,\infty}}^{p_2}}{\|f\|_{L_{p_1,\infty}}^{p_1}}\right)^{\frac{1}{p_2-p_1}}.
$$
Thus
\begin{eqnarray*}
\|f\|_{L_{p,s}}&=&\left(p\int_0^{\infty}(d_{f}(\alpha)^{\frac{1}{p}}\;\alpha)^s
\frac{d\alpha}{\alpha}\right)^{\frac{1}{s}}=
\left(p\int_0^\infty d_f(\alpha)^{\frac{s}{p}}\;\alpha^{s-1}d\alpha\right)^{\frac{1}{s}}\\
&\leq& \left(p\int_0^B
\alpha^{s-1}\;\left(\frac{\|f\|_{L_{p_1,\infty}}^{p_1}}{\alpha^{p_1}}\right)^
{\frac{s}{p}}d\alpha\right)^{\frac{1}{s}}+ \left(p\int_B^{\infty}
\alpha^{s-1}\;\left(\frac{\|f\|_{L_{p_2,\infty}}
^{p_2}}{\alpha^{p_2}}\right)^{\frac{s}{p}}d\alpha\right)^{\frac{1}{s}}\\
&=& p^{\frac{1}{s}}\;
\|f\|_{L_{p_1,\infty}}^{\frac{p_1}{p}}\;\left(\int_0^B
\alpha^{s-1-\frac{sp_1}{p}}d\alpha\right)^{\frac{1}{s}}+
p^{\frac{1}{s}}\;\|f\|_{L_{p_2,\infty}}^{\frac{p_2}{p}}\;
\left(\int_B^{\infty} \alpha^{s-1-\frac{sp_2}{p}}d\alpha\right)^{\frac{1}{s}}\\
&=& p^{\frac{1}{s}}\;\|f\|_{L_{p_1,\infty}}^{\frac{p_1}{p}}\left
(\frac{B^{s-\frac{sp_1}{p}}}{s-\frac{sp_1}{p}}\right)^{\frac{1}{s}}
+ p^{\frac{1}{s}}\;\|f\|_{L_{p_2,\infty}}^{\frac{p_2}{p}}\left
(\frac{B^{s-\frac{sp_2}{p}}}{\frac{sp_2}{p}-s}\right)^{\frac{1}{s}}\\
&=& \left(\frac{p}{(s-\frac{sp_1}{p})}\right)^{\frac{1}{s}}\;
\|f\|_{L_{p_1,\infty}}^{\frac{p_1}{p}.(\frac{p_2-p}{p_2-p_1})}\;
\|f\|_{L_{p_2,\infty}}^{\frac{p_2}{p}\;(\frac{p-p_1}{p_2-p_1})}\\
&&+\left(\frac{p}{(\frac{sp_2}{p}-s)}\right)^{\frac{1}{s}}\;
\|f\|_{L_{p_1,\infty}}^{\frac{p_1}{p}.(\frac{p_2-p}{p_2-p_1})}\;
\|f\|_{L_{p_2,\infty}}^{\frac{p_2}{p}\;(\frac{p-p_1}{p_2-p_1})} \\
&=& \left(\left(\frac{p}{(s-\frac{sp_1}{p})}\right)^{\frac{1}{s}}+
\left(\frac{p}{(\frac{sp_2}{p}-s)}\right)^{\frac{1}{s}}\right)
\;\|f\|_{L_{p_1,\infty}}^{\frac{p_1}{p}(\frac{p_2-p}{p_2-p_1})}\;
\|f\|_{L_{p_2,\infty}}^{\frac{p_2}{p}(\frac{p-p_1}{p_2-p_1})}\\
&<&\infty.
\end{eqnarray*}
Thus $f\in L_{p,s}(X,\mu)$. For $p_2=\infty$, since
$d_f(\alpha)=0$ for each $\alpha
>\|f\|_{\infty}$, inequality (\ref{e6}) implies that
$$\|f\|_{L_{p,s}}^s \leq \frac{p}{s-\frac{sp_1}{p}}
\|f\|_{p_1,\infty}^{\frac{sp_1}{p}}\|f\|_{\infty}^{s-\frac{sp_1}{p}},$$
which implies $f\in L_{p,s}(X,\mu)$. In the case where $s=\infty$,
by \cite[Proposition 1.1.14]{gr}, for $p_1 < r <p_2$ we have
$$L_{p_1,\infty}(X,\mu)\cap L_{p_2,\infty}(X,\mu)\subseteq L^r(X,\mu)\subseteq L_{r,\infty}(X,\mu).$$
This gives the proposition.
\end{proof}

\begin{pro}\label{p5}
Let $(X,\mu)$ be a measure space, $0<q\leq\infty$ and
$0<p_1<p_2\leq \infty$. Then
$$
\bigcap_{p_1\leq r\leq p_2} L_{r,q}(X,\mu)=L_{p_1,q}(X,\mu)\cap
L_{p_2,q}(X,\mu).
$$
Moreover for each $f\in L_{p_1,q}(X,\mu)\cap L_{p_2,q}(X,\mu)$ and
$p_1<r<p_2$,
$$
\|f\|_{L_{r,q}}\leq 2^{1/q}\max\{\|f\|_{L_{p_1,q}},
\|f\|_{L_{p_2,q}}\}.
$$
\end{pro}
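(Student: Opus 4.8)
The plan is to reduce the whole statement to the quantitative norm estimate, from which the set equality follows formally. The inclusion $\bigcap_{p_1\le r\le p_2}L_{r,q}(X,\mu)\subseteq L_{p_1,q}(X,\mu)\cap L_{p_2,q}(X,\mu)$ is immediate, since $r=p_1$ and $r=p_2$ are themselves admissible indices in the intersection on the left. For the reverse inclusion it is enough to check that every $f\in L_{p_1,q}(X,\mu)\cap L_{p_2,q}(X,\mu)$ has finite $\|\cdot\|_{L_{r,q}}$ for each interior index $r$, and the displayed inequality supplies exactly this finiteness together with the explicit constant.

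To establish the inequality for $0<q<\infty$, I would work directly from definition \eqref{e1} and split the defining integral at $t=1$:
$$\|f\|_{L_{r,q}}^q=\int_0^1\bigl(t^{1/r}f^*(t)\bigr)^q\frac{dt}{t}+\int_1^\infty\bigl(t^{1/r}f^*(t)\bigr)^q\frac{dt}{t}.$$
The single idea driving the proof is the monotonicity of $a\mapsto t^a$. Because $p_1<r<p_2$ yields $1/p_2<1/r<1/p_1$, we have $t^{1/r}\le t^{1/p_2}$ on $(0,1]$ and $t^{1/r}\le t^{1/p_1}$ on $[1,\infty)$. Inserting these two bounds and enlarging each truncated integral back to $(0,\infty)$ (the integrands being nonnegative) gives
$$\int_0^1\bigl(t^{1/r}f^*(t)\bigr)^q\frac{dt}{t}\le\|f\|_{L_{p_2,q}}^q,\qquad\int_1^\infty\bigl(t^{1/r}f^*(t)\bigr)^q\frac{dt}{t}\le\|f\|_{L_{p_1,q}}^q.$$
Adding these, $\|f\|_{L_{r,q}}^q\le\|f\|_{L_{p_1,q}}^q+\|f\|_{L_{p_2,q}}^q\le 2\max\{\|f\|_{L_{p_1,q}},\|f\|_{L_{p_2,q}}\}^q$, and a $q$-th root produces the asserted bound.

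For $q=\infty$ I would run the identical split with the two integrals replaced by the suprema over $(0,1]$ and $[1,\infty)$; the same monotonicity gives $\|f\|_{L_{r,\infty}}\le\max\{\|f\|_{L_{p_1,\infty}},\|f\|_{L_{p_2,\infty}}\}$, consistent with $2^{1/q}=1$ at $q=\infty$. The only delicate point is the endpoint $p_2=\infty$. When $q<\infty$ the Preliminaries record $L_{\infty,q}(X,\mu)=\{0\}$, so the intersection collapses to $\{0\}$ and the statement is vacuous; when $q=\infty$ one simply uses $1/p_2=0$, so that $t^{1/r}\le1$ on $(0,1]$ and the first supremum is still dominated by $\|f\|_{L_{\infty,\infty}}=\sup_{t>0}f^*(t)$. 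Apart from verifying these boundary conventions, the argument is elementary: the real content is noticing that splitting precisely at $t=1$ is what aligns the two exponent comparisons in the correct directions.
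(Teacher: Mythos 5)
Your proof is correct, and its core computation --- splitting the defining integral (or supremum) at $t=1$ and using the monotonicity bounds $t^{1/r}\le t^{1/p_2}$ on $(0,1]$ and $t^{1/r}\le t^{1/p_1}$ on $[1,\infty)$ --- is exactly the computation the paper uses to prove the displayed norm inequality. Where you genuinely differ is in the logical organization. The paper proves the set equality first, by a separate and heavier argument: it invokes \cite[Proposition 1.4.10]{gr} to pass from $L_{p_i,q}(X,\mu)$ into the weak spaces $L_{p_i,\infty}(X,\mu)$, and then applies its Proposition~\ref{p1} (the distribution-function estimate for $L_{p_1,\infty}\cap L_{p_2,\infty}$) to land in every intermediate space $L_{r,s}(X,\mu)$; only afterwards does it prove the quantitative estimate. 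You instead observe that the quantitative estimate by itself already yields the nontrivial inclusion $L_{p_1,q}(X,\mu)\cap L_{p_2,q}(X,\mu)\subseteq\bigcap_{p_1\le r\le p_2}L_{r,q}(X,\mu)$, since finiteness of $\|f\|_{L_{r,q}}$ is precisely membership in $L_{r,q}(X,\mu)$. This makes your proof self-contained: it needs neither Proposition~\ref{p1} nor the embedding $L_{p,q}\subseteq L_{p,\infty}$. What the paper's detour buys is stronger information --- membership in $L_{r,s}(X,\mu)$ for \emph{all} second indices $0<s\le\infty$, not merely $s=q$ --- which is the form it reuses later (for instance in Theorem~\ref{p9}); what your route buys is economy, plus an explicit treatment of the endpoint cases (when $p_2=\infty$ and $q<\infty$, both sides reduce to $L_{\infty,q}(X,\mu)=\{0\}$, and when $q=\infty$ the constant degenerates to $2^{1/q}=1$), which the paper leaves implicit.
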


\begin{proof}
By \cite[Proposition 1.4.10]{gr} and  Proposition \ref{p1} we have
\begin{eqnarray*}
L_{p_1,q}(X,\mu)\cap
L_{p_2,q}(X,\mu)&\subseteq&L_{p_1,\infty}(X,\mu)\cap
L_{p_2,\infty}(X,\mu)\\
&\subseteq&\bigcap_{p_1<r<p_2 , 0<s\leq\infty} L_{r,s}(X,\mu).
\end{eqnarray*}
It follows that $$L_{p_1,q}(X,\mu)\cap L_{p_2,q}(X,\mu)\subseteq
\bigcap_{p_1\leq r\leq p_2} L_{r,q}(X,\mu).$$ The converse of the
inclusion is clearly valid. Thus
$$L_{p_1,q}(X,\mu)\cap L_{p_2,q}(X,\mu)=\bigcap_{p_1\leq
r\leq p_2} L_{r,q}(X,\mu).
$$
Now let $q<\infty$. For each $f\in L_{p_1,q}(X,\mu)\cap
L_{p_2,q}(X,\mu)$, we have
\begin{eqnarray*}
\|f\|_{L_{r,q}}^q &=& \int_0^{\infty}\left(t^{\frac{1}{r}}f^*(t)\right)^q\frac{dt}{t}\\
&\leq&\int_0^1\left(t^{\frac{1}{p_2}}f^*(t)\right)^q\frac{dt}{t}+\int_1^{\infty}
\left(t^{\frac{1}{p_1}}f^*(t)\right)^q\frac{dt}{t}\\
&\leq&\|f\|_{L_{p_2,q}}^q+ \|f\|_{L_{p_1,q}}^q\\
&\leq& 2\;\max\{\|f\|_{L_{p_2,q}}^q,\|f\|_{L_{p_1,q}}^q\}
\end{eqnarray*}
Also for $q=\infty$ we have
\begin{eqnarray*}
\|f\|_{L_{r,\infty}} &=& \sup_{t>0} t^{\frac{1}{r}}f^*(t) \\
&\leq& \max \{\sup_{0<t<1} t^{\frac{1}{p_2}}f^*(t),  \sup_{t\geq 1}t^{\frac{1}{p_1}}f^*(t)\} \\
&\leq& \max \{\|f\|_{L_{p_2,\infty}} , \|f\|_{L_{p_1,\infty}}\}.
\end{eqnarray*}
This completes the proof.
\end{proof}

We are in a position to prove \cite[Proposition 2.3]{aalr1} for
Lorentz spaces. It is obtained in the following proposition.
Recall from \cite{aalr1} that for a subset $J$ of $(0,\infty)$,
$$M_J=\sup\{p: p\in J\}.$$
\begin{pro}\label{p6}
Let $(X,\mu)$ be a measure space, $0<q\leq \infty$ and $J$ be a
subset of $(0,\infty)$ such that $0<m_J$. Then the following
assertions hold.
\begin{enumerate}
\item[(i)] If $m_J, M_J\in J$, then
$$\bigcap_{p\in[m_J,M_J]}L_{p,q}(X,\mu)=\bigcap_{p\in
J}L_{p,q}(X,\mu)=L_{m_J,q}(X,\mu)\cap L_{M_J,q}(X,\mu).$$
\item[(ii)] If $m_J\in J$ and $M_J\notin J$, then $\bigcap_{p\in
J}L_{p,q}(X,\mu)=\bigcap_{p\in [m_J,M_J)}L_{p,q}(X,\mu)$.
\item[(iii)] If $m_J\notin J$ and $M_J\in J$, then $\bigcap_{p\in
J}L_{p,q}(X,\mu)=\bigcap_{p\in (m_J,M_J]}L_{p,q}(X,\mu)$.
\item[(iv)] If $m_J,M_J\notin J$, then $\bigcap_{p\in
J}L_{p,q}(X,\mu)=\bigcap_{p\in (m_J,M_J)}L_{p,q}(X,\mu)$.
\end{enumerate}
\end{pro}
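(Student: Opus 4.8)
The plan is to deduce all four parts from Proposition \ref{p5} together with the characterization of $m_J$ and $M_J$ as, respectively, the infimum and supremum of $J$. Write $I$ for the interval attached to each case, namely $[m_J,M_J]$, $[m_J,M_J)$, $(m_J,M_J]$ and $(m_J,M_J)$ in (i)--(iv). In every case one checks directly that $J\subseteq I$, whence $\bigcap_{p\in I}L_{p,q}(X,\mu)\subseteq\bigcap_{p\in J}L_{p,q}(X,\mu)$, since an intersection over a larger index set is contained in the intersection over a smaller one. This disposes of the easy inclusion uniformly, so all the work lies in the reverse inclusion $\bigcap_{p\in J}L_{p,q}(X,\mu)\subseteq\bigcap_{p\in I}L_{p,q}(X,\mu)$.

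For the reverse inclusion I would isolate a single bracketing observation and then invoke it in each case. Suppose $f\in\bigcap_{p\in J}L_{p,q}(X,\mu)$, and suppose an index $r$ can be bracketed by two members of $J$, that is, there exist $a,b\in J$ with $a\le r\le b$. If $a=b$ then $r=a\in J$ and $f\in L_{r,q}(X,\mu)$ trivially. If $a<b$, then $0<m_J\le a$, so Proposition \ref{p5} applies and gives $L_{a,q}(X,\mu)\cap L_{b,q}(X,\mu)=\bigcap_{a\le s\le b}L_{s,q}(X,\mu)\subseteq L_{r,q}(X,\mu)$; since $a,b\in J$ we have $f\in L_{a,q}(X,\mu)\cap L_{b,q}(X,\mu)$, and therefore $f\in L_{r,q}(X,\mu)$. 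Thus every index admitting such a bracketing already lies in the admissible range for $f$.

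It then remains to produce, for each $r\in I$, a suitable bracketing pair in $J$. In case (i) I would simply take $a=m_J$ and $b=M_J$, which lie in $J$ by hypothesis and satisfy $m_J\le r\le M_J$ for every $r\in[m_J,M_J]$; this yields the inclusion, and the additional equality with $L_{m_J,q}(X,\mu)\cap L_{M_J,q}(X,\mu)$ is exactly Proposition \ref{p5} with $p_1=m_J$ and $p_2=M_J$. In case (ii), for $r\in[m_J,M_J)$ take $a=m_J\in J$ and, using $r<M_J=\sup J$, choose $b\in J$ with $r<b$, so that $a\le r\le b$. Cases (iii) and (iv) are symmetric: in (iii) take $b=M_J\in J$ and, from $r>m_J=\inf J$, choose $a\in J$ with $a<r$; in (iv) use both the infimum and the supremum properties to pick $a,b\in J$ with $a<r<b$.

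I do not anticipate a genuine obstacle, since all the analytic content is absorbed into Proposition \ref{p5}; the only points needing care are the degenerate possibilities. If $m_J=M_J$ then $J$ is a singleton and every assertion is immediate, so one may assume $m_J<M_J$. One should also note that in cases (ii) and (iv) the supremum $M_J$ may equal $\infty$, but this causes no trouble: every $r\in I$ is finite, and the bracketing member $b\in J$ selected above is a finite element of $J\subseteq(0,\infty)$, so Proposition \ref{p5} is invariably applied to finite exponents $a<b$.
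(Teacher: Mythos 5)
Your proposal is correct and follows essentially the same route as the paper: the easy inclusion from $J\subseteq I$, and for the reverse inclusion the bracketing of each $r\in I$ by elements $a,b\in J$ followed by an application of Proposition \ref{p5}, exactly as the paper does in case (ii) (with cases (iii), (iv) declared ``similar'' and case (i) read off directly from Proposition \ref{p5}). Your write-up is in fact slightly more careful than the paper's, since you treat the endpoint $r=m_J$ via non-strict bracketing and address the degenerate possibilities $m_J=M_J$ and $M_J=\infty$ explicitly.
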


\begin{proof}
$(i)$. It is clearly obtain by Proposition \ref{p5}.

$(ii)$. Let $f\in \bigcap_{p\in J}L_{p,q}(X,\mu)$ and take $m_J <
t<M_J$. Then there exist $t_1,t_2\in J$ such that $t_1<t<t_2$. So
by Proposition \ref{p5}
$$
f\in L_{t_1,q}(X,\mu)\cap L_{t_2,q}(X,\mu)=\bigcap_{t_1\leq p\leq
t_2} L_{p,q}(X,\mu)
$$
and thus $f\in L_{t,q}(X,\mu)$. It follows that
$$
\bigcap_{p\in J}L_{p,q}(X,\mu)\subseteq L_{t,q}(X,\mu),
$$
for each $t\in [m_J,M_J)$. Consequently
$$
\bigcap_{p\in J}L_{p,q}(X,\mu)\subseteq\bigcap_{p\in
[m_J,M_J)}L_{p,q}(X,\mu).
$$
The converse of the inclusion is clear.

$(iii)$ and $(iv)$ are proved in a similar way.
\end{proof}

\noindent Similar to the definition of $IL_{p,Q}(X,\mu)$ given in
(\ref{e7}), for $J, Q\subseteq (0,\infty)$ let
\begin{center}
$IL_{J,q}(X,\mu)=\{f\in\bigcap_{p\in J} L_{p,q}(X,\mu):\;\;
\|f\|_{L_{J,q}}=\sup_{p\in J} \|f\|_{L_{p,q}}<\infty\}$
\end{center}
and
\begin{center}
$IL_{J,Q}(X,\mu)=\{f\in\bigcap_{p\in J,q\in Q} L_{p,q}(X,\mu):\;\;
\|f\|_{L_{J,Q}}=\sup_{p\in J,q\in Q} \|f\|_{L_{p,q}}<\infty\}$.
\end{center}

\begin{pro}\label{p7}
Let $(X,\mu)$ be a measure space, $0<q\leq\infty$ and $J\subseteq
(0,\infty)$ such that $m_J>0$. Then
$$IL_{J,q}(X,\mu)\subseteq
L_{m_J,q}(X,\mu)\cap L_{M_J,q}(X,\mu).$$
\end{pro}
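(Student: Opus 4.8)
The plan is to reduce the statement to a limiting argument at the two endpoints $m_J$ and $M_J$. When both endpoints are attained, that is $m_J\in J$ and $M_J\in J$, there is nothing to prove: by definition every $f\in IL_{J,q}(X,\mu)$ already lies in $L_{m_J,q}(X,\mu)\cap L_{M_J,q}(X,\mu)$. Thus the only real content is the case of an unattained endpoint, and in particular the case $M_J=\infty$ (which forces $M_J\notin J$ since $J\subseteq(0,\infty)$). Fix $f\in IL_{J,q}(X,\mu)$ and write $C=\|f\|_{L_{J,q}}=\sup_{p\in J}\|f\|_{L_{p,q}}<\infty$. The key analytic tool will be Fatou's lemma applied to the expression of the Lorentz norm in terms of the decreasing rearrangement, namely $\|f\|_{L_{p,q}}^{q}=\int_0^{\infty}t^{q/p-1}f^*(t)^q\,dt$ for $q<\infty$.

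First I would treat the lower endpoint. If $m_J\in J$ we are done, so assume $m_J\notin J$ and choose $p_n\in J$ with $p_n\downarrow m_J$. Then $q/p_n-1\to q/m_J-1$, so for each fixed $t>0$ the integrands converge pointwise, $t^{q/p_n-1}f^*(t)^q\to t^{q/m_J-1}f^*(t)^q$. Since these integrands are nonnegative, Fatou's lemma gives
$$\|f\|_{L_{m_J,q}}^{q}=\int_0^{\infty}t^{q/m_J-1}f^*(t)^q\,dt\le\liminf_{n}\int_0^{\infty}t^{q/p_n-1}f^*(t)^q\,dt=\liminf_{n}\|f\|_{L_{p_n,q}}^{q}\le C^{q},$$
so $f\in L_{m_J,q}(X,\mu)$. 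For $q=\infty$ no integration is needed: for each fixed $t>0$ one has $t^{1/m_J}f^*(t)=\lim_n t^{1/p_n}f^*(t)\le C$, and taking the supremum over $t$ yields $\|f\|_{L_{m_J,\infty}}\le C$.

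The upper endpoint is handled the same way, now with $p_n\in J$, $p_n\uparrow M_J$, so that $q/p_n-1\to q/M_J-1$, where $1/M_J$ is read as $0$ when $M_J=\infty$. For $M_J<\infty$ the Fatou estimate reads exactly as above and gives $\|f\|_{L_{M_J,q}}\le C$ (for $q=\infty$, again the pointwise-limit-and-supremum argument applies). When $M_J=\infty$ and $q<\infty$ the limiting exponent is $t^{-1}$, and Fatou yields $\|f\|_{L_{\infty,q}}^{q}=\int_0^{\infty}f^*(t)^q\,\frac{dt}{t}\le C^q<\infty$; since membership in $L_{\infty,q}(X,\mu)$ is by definition the finiteness of this norm, we get $f\in L_{M_J,q}(X,\mu)$ (consistently with $L_{\infty,q}(X,\mu)=\{0\}$, the bound in fact forces $f=0$). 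When $M_J=\infty$ and $q=\infty$, the pointwise limit is $t^{1/p_n}f^*(t)\to f^*(t)$, whence $f^*(t)\le C$ for all $t>0$ and therefore $\|f\|_{\infty}=\sup_{t>0}f^*(t)\le C$, i.e. $f\in L^{\infty}(X,\mu)=L_{\infty,\infty}(X,\mu)$.

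The only delicate point, and the step I expect to require the most care, is the passage to an unattained endpoint: one cannot simply evaluate at $p=m_J$ or $p=M_J$ because these values may fail to belong to $J$, so the uniform bound $C$ must be transferred to the limit. Fatou's lemma does exactly this for $q<\infty$, while the monotone behaviour of $t\mapsto t^{1/p}$ reduces the $q=\infty$ case to a direct pointwise estimate. I would also note that this argument produces the quantitative refinement $\max\{\|f\|_{L_{m_J,q}},\|f\|_{L_{M_J,q}}\}\le\|f\|_{L_{J,q}}$, slightly stronger than the stated inclusion.
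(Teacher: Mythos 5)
Your proof is correct and follows essentially the same route as the paper's: approximate each unattained endpoint by a sequence in $J$, apply Fatou's lemma to the rearrangement integral when $q<\infty$, and use the pointwise bound $t^{1/p}f^*(t)\le\|f\|_{L_{p,\infty}}$ followed by a supremum when $q=\infty$, including the observation that $M_J=\infty$ with $q<\infty$ forces $f=0$ via $L_{\infty,q}(X,\mu)=\{0\}$. The only differences are organizational (you dispose of attained endpoints trivially, while the paper runs the same sequence argument uniformly), and your explicit remark that the argument yields $\max\{\|f\|_{L_{m_J,q}},\|f\|_{L_{M_J,q}}\}\le\|f\|_{L_{J,q}}$ is a nice, though implicit in the paper, quantitative addendum.
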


\begin{proof}
First, let $q<\infty$. We follow a proof similar to the proof of
\cite[Theorem 12]{aalr2}. Suppose that $M_J<\infty$ and $(x_n)$ is
a sequences in $J$ such that $\lim_n x_n=M_J$. For $f\in
IL_{J,q}(X,\mu)$ by Fatou's lemma, we have
\begin{eqnarray*}
\|f\|_{L_{M_J,q}}^q &=&\int_0^{\infty}
\left(t^{\frac{1}{M_J}}.f^*(t)\right)^q\frac{dt}{t}\\
&=&\int_0^{\infty} \liminf_n\left(t^{\frac{1}{x_n}}.f^*(t)\right)^q \frac{dt}{t}\\
&\leq& \liminf_n \int_0^{\infty}\left(t^{\frac{1}{x_n}}.f^*(t)\right)^q \frac{dt}{t}\\
&=&\liminf_n \|f\|_{L_{x_n,q}}^q\\
&\leq&\|f\|_{J,q}^q\\
&<&\infty.
\end{eqnarray*}
If $M_J=\infty$ and $f\in IL_{J,q}(X,\mu)$, then
\begin{eqnarray*}
\left(\int_0^{\infty}f^*(t)^q\frac{dt}{t}\right)^{1/q}&=&\left(\int_0^{\infty}
\liminf_n
\left(t^{\frac{1}{x_n}}f^*(t)\right)^q\frac{dt}{t}\right)^{1/q}\\
&\leq&\liminf_n \|f\|_{L^{x_n,q}}\leq \|f\|_{J,q}<\infty.
\end{eqnarray*}
On the other hand, as we mentioned in section 1, since $q<\infty$
then $L_{\infty,q}(X,\mu)=\{0\}$ and since
$\int_0^{\infty}f^*(t)^q\frac{dt}{t}<\infty$, so we have $f=0$,
$\mu-$almost every where on $X$. Thus
$IL_{J,q}(X,\mu)=L_{\infty,q}(X,\mu)=\{0\}$. It follows that
$IL_{J,q}(X,\mu)\subseteq L_{M_J,q}(X,\mu)$.

Now suppose that $q=\infty$ and $f\in IL_{J,q}(X,\mu)$. Then
\begin{eqnarray*}
\|f\|_{L_{M_J,\infty}}&=&\sup_{t>0}t^{\frac{1}{M_J}}\;f^*(t)=\sup_{t>0}\left(\lim_n t^{\frac{1}{x_n}}.f^*(t)\right)\\
&\leq& \sup_{t>0}\left(\lim_n \|f\|_{L_{x_n,\infty}}\right)
=\|f\|_{L_{J,\infty}}< \infty,
\end{eqnarray*}
and so $f\in L_{M_J,\infty}(X,\mu)$. Thus we proved that
$IL_{J,q}(X,\mu)\subseteq L_{M_J,q}(X,\mu)$, for each
$0<q\leq\infty$. Using some similar arguments, one can obtain that
$IL_{J,q}(X,\mu)\subseteq L_{m_J,q}(X,\mu)$. Consequently the
proof is complete.
\end{proof}

The following proposition is obtained immediately from
Propositions \ref{p5}, \ref{p6} and \ref{p7}.

\begin{pro}\label{p8}
Let $(X,\mu)$ be a measure space, $0<q\leq \infty$ and $J\subseteq
(0,\infty)$ such that $m_J>0$ and $M_J<\infty$. Then
\begin{eqnarray*}
IL_{J,q}(X,\mu)&=& IL_{(m_J,M_J),q}(X,\mu)=IL_{[m_J,M_J),q}(X,\mu)\\
&=& IL_{(m_J,M_J],q}(X,\mu)=IL_{[m_J,M_J],q}(X,\mu)\\
&=&L_{m_J,q}(X,\mu)\cap L_{M_J,q}(X,\mu).
\end{eqnarray*}
Furthermore, for each $f\in IL_{J,q}(X,\mu)$ and $p\in J$,
$$
\|f\|_{L_{p,q}}\leq 2^{1/q}
\max\{\|f\|_{L_{m_J,q}},\|f\|_{L_{M_J,q}}\}
$$
\end{pro}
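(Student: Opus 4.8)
The plan is to assemble the chain of equalities from the three preceding propositions, treating it as a bookkeeping exercise rather than a fresh computation. First I would observe that since $m_J > 0$ and $M_J < \infty$, the interval $[m_J, M_J]$ is a genuine compact subinterval of $(0,\infty)$, so Proposition \ref{p5} applies with $p_1 = m_J$ and $p_2 = M_J$ to give $\bigcap_{m_J \le r \le M_J} L_{r,q}(X,\mu) = L_{m_J,q}(X,\mu) \cap L_{M_J,q}(X,\mu)$. The strategy is to show that each of the five $IL$-spaces in the display has exactly the same underlying function space, namely $L_{m_J,q} \cap L_{M_J,q}$, and carries the same (finite) norm $\|\cdot\|_{L_{J,q}}$ on that space.

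The core of the argument is a single two-sided inclusion. For the inclusion $IL_{J,q}(X,\mu) \subseteq L_{m_J,q}(X,\mu) \cap L_{M_J,q}(X,\mu)$ I would simply invoke Proposition \ref{p7}, which delivers exactly this. For the reverse direction, suppose $f \in L_{m_J,q}(X,\mu) \cap L_{M_J,q}(X,\mu)$. By Proposition \ref{p5}, for every $r$ with $m_J < r < M_J$ we have the quantitative bound
$$
\|f\|_{L_{r,q}} \le 2^{1/q} \max\{\|f\|_{L_{m_J,q}}, \|f\|_{L_{M_J,q}}\},
$$
and this bound also holds trivially at the endpoints $r = m_J$ and $r = M_J$ (there the right-hand side dominates the corresponding term). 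Since $J \subseteq [m_J, M_J]$, taking the supremum over $p \in J$ gives $\|f\|_{L_{J,q}} = \sup_{p\in J} \|f\|_{L_{p,q}} \le 2^{1/q} \max\{\|f\|_{L_{m_J,q}}, \|f\|_{L_{M_J,q}}\} < \infty$, so $f \in IL_{J,q}(X,\mu)$. This establishes $IL_{J,q}(X,\mu) = L_{m_J,q}(X,\mu) \cap L_{M_J,q}(X,\mu)$ as sets, and simultaneously proves the displayed norm inequality in the "furthermore" clause.

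To complete the chain of five equalities, I would note that the argument above never used any feature of $J$ beyond $m_J = \inf J$ and $M_J = \sup J$; it used only that $J$ is sandwiched inside $[m_J, M_J]$ and that the relevant endpoints are captured (for the inclusion into $L_{m_J,q} \cap L_{M_J,q}$, Proposition \ref{p7} handles the endpoints via Fatou-type limiting even when they are not attained in $J$). Hence each of the index sets $(m_J, M_J)$, $[m_J, M_J)$, $(m_J, M_J]$, $[m_J, M_J]$, and $J$ itself yields an $IL$-space equal to $L_{m_J,q}(X,\mu) \cap L_{M_J,q}(X,\mu)$, and all five norms coincide with the single supremum taken over the maximal interval. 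The equality of the four interval versions then follows formally, and Proposition \ref{p6} can be cited to corroborate that the underlying intersections agree.

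The step I expect to require the most care is the handling of the endpoints when they are not members of $J$, since the reverse inclusion needs $\|f\|_{L_{m_J,q}}$ and $\|f\|_{L_{M_J,q}}$ to be finite in order to bound $\sup_{p\in J}\|f\|_{L_{p,q}}$. This is exactly why Proposition \ref{p7}, with its Fatou's-lemma passage to the limiting index, is indispensable: it guarantees that membership in the intersection $\bigcap_{p\in J} L_{p,q}$ with bounded norm forces control at the closed endpoints even when $m_J, M_J \notin J$. Everything else is routine assembly from Propositions \ref{p5}, \ref{p6}, and \ref{p7}.
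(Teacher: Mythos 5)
Your proposal is correct and takes essentially the same route as the paper: the paper offers no written proof at all, stating only that the result ``is obtained immediately from Propositions \ref{p5}, \ref{p6} and \ref{p7},'' and your assembly (Proposition \ref{p7} for the forward inclusion, the quantitative bound of Proposition \ref{p5} extended trivially to the endpoints for the reverse inclusion, and the observation that only $\inf J$ and $\sup J$ matter) is precisely that intended argument, spelled out. One minor overstatement: the five $IL$-norms need not literally coincide (a supremum over $J$ can be strictly smaller than the supremum over $[m_J,M_J]$ when $J$ is not dense there), but the proposition asserts only equality of the underlying sets together with the displayed inequality, both of which your argument does establish.
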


\begin{thm}\label{p9}
Let $(X,\mu)$ be a measure space and $J,Q\subseteq (0,\infty)$
such that $m_J,m_Q>0$. Then
\begin{equation}\label{e9}
IL_{J,Q}(X,\mu)=L_{m_J,m_Q}(X,\mu)\cap L_{M_J,m_Q}(X,\mu).
\end{equation}
Moreover for each $f\in IL_{J,Q}(X,\mu)$
\begin{eqnarray*}
\max\{\|f\|_{L_{m_J,m_Q}}, \|f\|_{L_{M_J,m_Q}}\}&\leq&\sup_{p\in
J,q\in Q} \|f\|_{L_{p,q}}\\
&\leq& K\max \{\|f\|_{L_{m_J,m_Q}}, \|f\|_{L_{M_J,m_Q}}\},
\end{eqnarray*}
for some positive constant $K>0$.
\end{thm}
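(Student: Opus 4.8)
The plan is to reduce the double index set $J\times Q$ to a single index by iterating the two one-variable reductions already at hand: first collapse the $q$-variable to $m_Q$ using the identity $IL_{p,Q}(X,\mu)=L_{p,m_Q}(X,\mu)$ together with the estimate \eqref{e8} (the measure-space form of \cite[Theorem 12]{aalr2}), and then collapse the $p$-variable using Propositions \ref{p7} and \ref{p8}. Throughout one notes that $m_Q=\inf Q<\infty$ because $Q\subseteq(0,\infty)$ is nonempty, while $0<m_Q$ and $0<m_J$ by hypothesis.

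For the inclusion $IL_{J,Q}(X,\mu)\subseteq L_{m_J,m_Q}(X,\mu)\cap L_{M_J,m_Q}(X,\mu)$ and the lower estimate, I would fix $f\in IL_{J,Q}(X,\mu)$ and set $C=\sup_{p\in J,q\in Q}\|f\|_{L_{p,q}}<\infty$. For each fixed $p\in J$ one has $\sup_{q\in Q}\|f\|_{L_{p,q}}\leq C$, so $f\in IL_{p,Q}(X,\mu)=L_{p,m_Q}(X,\mu)$, and the left inequality in \eqref{e8} gives $\|f\|_{L_{p,m_Q}}\leq\|f\|_{L_{p,Q}}\leq C$. Taking the supremum over $p\in J$ shows $f\in IL_{J,m_Q}(X,\mu)$ with $\|f\|_{L_{J,m_Q}}\leq C$. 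Applying Proposition \ref{p7} with the fixed second index $m_Q$ now places $f$ in $L_{m_J,m_Q}(X,\mu)\cap L_{M_J,m_Q}(X,\mu)$; moreover the Fatou-type argument in its proof in fact yields $\max\{\|f\|_{L_{m_J,m_Q}},\|f\|_{L_{M_J,m_Q}}\}\leq\|f\|_{L_{J,m_Q}}\leq C$, which is the asserted lower bound.

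For the reverse inclusion and the upper estimate I would first dispose of the case $M_J=\infty$: there $L_{M_J,m_Q}(X,\mu)=L_{\infty,m_Q}(X,\mu)=\{0\}$ since $0<m_Q<\infty$, so the right-hand side of \eqref{e9} reduces to $\{0\}$ and the claim is trivial. Assuming $M_J<\infty$, take $f\in L_{m_J,m_Q}(X,\mu)\cap L_{M_J,m_Q}(X,\mu)$. Since $J\subseteq[m_J,M_J]$, Proposition \ref{p8} gives $f\in L_{p,m_Q}(X,\mu)$ for every $p\in J$, with $\|f\|_{L_{p,m_Q}}\leq 2^{1/m_Q}\max\{\|f\|_{L_{m_J,m_Q}},\|f\|_{L_{M_J,m_Q}}\}$. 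For every $q\in Q$ the right inequality in \eqref{e8} gives $\|f\|_{L_{p,q}}\leq\max\{1,(m_Q/p)^{1/m_Q}\}\|f\|_{L_{p,m_Q}}$, and since $p\geq m_J$ the prefactor is bounded above by $\max\{1,(m_Q/m_J)^{1/m_Q}\}$ uniformly in $p$ and $q$. Combining the two estimates and taking the supremum over $p\in J$ and $q\in Q$ yields $\sup_{p,q}\|f\|_{L_{p,q}}\leq K\max\{\|f\|_{L_{m_J,m_Q}},\|f\|_{L_{M_J,m_Q}}\}$ with $K=2^{1/m_Q}\max\{1,(m_Q/m_J)^{1/m_Q}\}$, so $f\in IL_{J,Q}(X,\mu)$ and the upper bound holds.

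Because the argument merely iterates two established reductions, I do not anticipate a genuine obstacle; the steps requiring care are the separate treatment of $M_J=\infty$ (where the target space collapses to $\{0\}$) and the uniform control of the $p$-dependent constant $\max\{1,(m_Q/p)^{1/m_Q}\}$ from \eqref{e8}, which is precisely where the hypothesis $m_J>0$ enters. A minor subtlety is that Proposition \ref{p7} is stated as an inclusion only, so for the lower bound I would invoke the norm inequality that its proof actually establishes rather than the displayed statement.
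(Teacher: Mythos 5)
Your proposal is correct and takes essentially the same route as the paper: both proofs reduce the double index set to the endpoint norms at $(m_J,m_Q)$ and $(M_J,m_Q)$ by iterating the two one-variable reductions (\cite[Theorem 12]{aalr2}, i.e.\ inequality \eqref{e8}, for the $q$-direction and Propositions \ref{p7}--\ref{p8} for the $p$-direction), use Fatou's lemma for the lower estimate, and arrive at exactly the same constant $K=2^{1/m_Q}\max\{1,(m_Q/m_J)^{1/m_Q}\}$. The only deviations are cosmetic rather than structural: in the forward inclusion you collapse the $q$-variable first and the $p$-variable second (the paper does the reverse), and you handle the degenerate case $M_J=\infty$ explicitly, a point the paper leaves implicit.
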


\begin{proof}
Let $f\in IL_{J,Q}(X,\mu)$. Then by proposition \ref{p8}, we have
$$f\in L_{m_J,q}(X,\mu)\cap L_{M_J,q}(X,\mu),$$
for each $q\in Q$, and so
$$
f\in\left(\cap_{q\in Q} L_{m_J,q}(X,\mu)\right)\cap
\left(\cap_{q\in Q}L_{M_J,q}(X,\mu)\right).
$$
Thus \cite[Theorem 12]{aalr2} implies that $f\in
L_{m_J,m_Q}(X,\mu)\cap L_{M_J,m_Q}(X,\mu)$. Also by Fatou's lemma,
one can readily obtain that
$$
\|f\|_{L_{m_J,m_Q}}\leq\sup_{p\in J,q\in Q} \|f\|_{L_{p,q}}<\infty
$$
and also
$$
\|f\|_{L_{M_J,m_Q}}\leq\sup_{p\in J,q\in Q}
\|f\|_{L_{p,q}}<\infty.
$$
For the converse, note that by Proposition \ref{p5} and
\cite[Proposition 1.4.10]{gr}, we have
\begin{eqnarray*}
L_{m_J,m_Q}(X,\mu)\cap L_{M_J,m_Q}(X,\mu)&=&\bigcap_{m_J\leq r\leq
M_J} L_{r,m_Q}(X,\mu)\\
&\subseteq&\bigcap_{m_J\leq r\leq M_J, m_Q\leq t\leq
M_Q}L_{r,t}(X,\mu).
\end{eqnarray*}
It follows that $$L_{m_J,m_Q}(X,\mu)\cap
L_{M_J,m_Q}(X,\mu)\subseteq\bigcap_{p\in J,q\in Q}
L_{p,q}(X,\mu).$$ By Proposition \ref{p8} and \cite[Theorem
12]{aalr2}, for each $f\in L_{m_J,m_Q}(X,\mu)\cap
L_{M_J,m_Q}(X,\mu)$ we have
\begin{eqnarray*}
\max\{\|f\|_{L_{m_J,m_Q}} , \|f\|_{L_{M_J,m_Q}}\}
&\leq&\sup_{m_J\leq p\leq M_J,m_Q\leq q\leq M_Q}\|f\|_{L_{p,q}}\\
&\leq&\sup_{m_J\leq p\leq M_J}\left[\max\{1,(\frac{m_Q}{p})^{\frac{1}{m_Q}}\}\;\|f\|_{L_{p,m_Q}}\right]\\
&\leq&2^{1/{m_Q}} \max\{1,(\frac{m_Q}{m_J})^{\frac{1}{m_Q}}\}\;
\max\{\|f\|_{L_{m_J,m_Q}},\|f\|_{L_{M_J,m_Q}}\}
\end{eqnarray*}
and so the inequality is provided by choosing
$$K=2^{1/{m_Q}}\max\{1,(\frac{m_Q}{m_J})^{\frac{1}{m_Q}}\}.$$
Moreover $f\in IL_{J,Q}(X,\mu)$ and the equality \eqref{e9} is
satisfied.
\end{proof}

\begin{pro}\label{p12}
Let $(X,\mu)$ be a measure space and $0<p\leq\infty$. Then $fg\in
L_{p,\infty}(X,\mu)$, for each $f\in L^{\infty}(X,\mu)$ and $g\in
L_{p,\infty}(X,\mu)$.
\end{pro}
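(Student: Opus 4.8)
The plan is to reduce the statement to a pointwise domination of $fg$ by a scalar multiple of $g$, and then transfer that domination to the level of decreasing rearrangements, where the Lorentz quasi-norm is computed. Since $f\in L^{\infty}(X,\mu)$, we have $|f(x)|\le\|f\|_{\infty}$ for $\mu$-almost every $x\in X$, and hence
$$
|f(x)g(x)|\le\|f\|_{\infty}\,|g(x)|\qquad(\mu\text{-a.e. }x\in X).
$$
First I would record two elementary properties of the map $h\mapsto h^{*}$. The first is monotonicity: if $|h|\le|k|$ $\mu$-almost everywhere, then $d_{h}(\alpha)\le d_{k}(\alpha)$ for every $\alpha>0$ (the exceptional null set does not change the distribution function), whence $\{s>0:d_{k}(s)\le t\}\subseteq\{s>0:d_{h}(s)\le t\}$ and therefore $h^{*}(t)\le k^{*}(t)$ for all $t>0$. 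The second is homogeneity: for a constant $c>0$ one has $d_{cg}(\alpha)=d_{g}(\alpha/c)$, so the substitution $s=cu$ in the defining infimum gives $(cg)^{*}(t)=c\,g^{*}(t)$ for every $t>0$.

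Applying these two facts with $h=fg$, $k=\|f\|_{\infty}g$ and $c=\|f\|_{\infty}$ yields the key rearrangement estimate
$$
(fg)^{*}(t)\le(\|f\|_{\infty}g)^{*}(t)=\|f\|_{\infty}\,g^{*}(t)\qquad(t>0);
$$
here one may assume $\|f\|_{\infty}>0$, since otherwise $fg=0$ $\mu$-a.e. and the conclusion is immediate. Multiplying by $t^{1/p}$ and taking the supremum over $t>0$, the definition \eqref{e3} then gives at once
$$
\|fg\|_{L_{p,\infty}}=\sup_{t>0}t^{1/p}(fg)^{*}(t)\le\|f\|_{\infty}\sup_{t>0}t^{1/p}g^{*}(t)=\|f\|_{\infty}\,\|g\|_{L_{p,\infty}}<\infty,
$$
so that $fg\in L_{p,\infty}(X,\mu)$ as desired.

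I expect no serious obstacle: the whole argument is a two-line consequence of the definition of the Lorentz quasi-norm together with the monotonicity and homogeneity of the decreasing rearrangement, both of which are standard and already implicit in the results of \cite{gr} invoked earlier. The only point requiring a little care is verifying that the merely almost-everywhere inequality $|fg|\le\|f\|_{\infty}|g|$ suffices to compare the distribution functions, which it does. A pleasant feature of this approach is that the single computation above covers all $0<p\le\infty$ simultaneously, since for $p=\infty$ the exponent $1/p$ is read as $0$ and the estimate becomes $\|fg\|_{L_{\infty,\infty}}\le\|f\|_{\infty}\|g\|_{L_{\infty,\infty}}$, recovering the elementary fact that the product of two $L^{\infty}$ functions again lies in $L^{\infty}=L_{\infty,\infty}(X,\mu)$.
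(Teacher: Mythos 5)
Your proof is correct, but it takes a genuinely different route from the paper. The paper invokes the submultiplicativity property of decreasing rearrangements from \cite[Proposition 1.4.5]{gr}, namely $(fg)^*(t)\le f^*(t/2)\,g^*(t/2)$, and then rescales $t\mapsto 2t$, which produces the bound $\|fg\|_{L_{p,\infty}}\le 2^{1/p}\,\|f\|_{\infty}\,\|g\|_{L_{p,\infty}}$ with the extraneous factor $2^{1/p}$. You instead exploit the special role of the $L^{\infty}$ factor: the pointwise domination $|fg|\le\|f\|_{\infty}|g|$ $\mu$-a.e., combined with monotonicity and positive homogeneity of $h\mapsto h^*$ (both of which you verify correctly, including the reduction to $\|f\|_{\infty}>0$ needed for the homogeneity substitution), gives $(fg)^*(t)\le\|f\|_{\infty}\,g^*(t)$ directly and hence the sharper estimate $\|fg\|_{L_{p,\infty}}\le\|f\|_{\infty}\,\|g\|_{L_{p,\infty}}$ with constant $1$. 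Your argument is more elementary (it avoids the nontrivial product-rearrangement inequality entirely) and yields a better constant; the paper's method has the advantage that it is the specialization of a technique that works when neither factor is bounded, i.e.\ it is the template for H\"older-type inequalities between general Lorentz spaces, whereas your pointwise domination is only available because one factor lies in $L^{\infty}$. Both proofs cover all $0<p\le\infty$, as you note.
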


\begin{proof}
By \cite[Proposition 1.4.5]{gr} parts $(7)$ and $(15)$, we have
\begin{eqnarray*}
\|f\;g\|_{p,\infty}&=&\sup_{t>0}\left(t^{\frac{1}{p}}\;(f\;g)^*(t)\right)
\leq\sup_{t>0}\left(t^{\frac{1}{p}} f^*(\frac{t}{2})\;g^*(\frac{t}{2})\right) \\
&=& \sup_{t>0}\left((2t)^{\frac{1}{p}}\;f^*(t)\;g^*(t)\right)
\leq 2^{\frac{1}{p}}\;\|g\|_{p,\infty}\;\|f\|_{\infty}\\
&<&\infty.
\end{eqnarray*}
It follows that $fg\in L_{p,\infty}(X,\mu)$.
\end{proof}

\begin{pro}\label{p13}
Let $(X,\mu)$ be a measure space, $0<p\leq \infty$ and
$J,Q\subseteq (0,\infty)$ such that $m_J>0$, $m_Q>0$ and $m_Q\in
Q$. Then $IL_{J,Q}(X,\mu)=A\cap B$, where
$$
A=\{f\in\bigcap_{p\in J,q\in Q} L_{p,q}(X,\mu),\; M_q= \sup_{p\in
J} \|f\|_{L_{p,q}}<\infty,~ \forall q\in Q\}
$$
and
$$
B=\{f\in \bigcap_{p\in J,q\in Q} L_{p,q}(X,\mu),\; M_p= \sup_{q\in
Q} \|f\|_{L_{p,q}}<\infty,~ \forall p\in J\}.
$$
\end{pro}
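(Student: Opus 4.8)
The plan is to establish the two inclusions $IL_{J,Q}(X,\mu)\subseteq A\cap B$ and $A\cap B\subseteq IL_{J,Q}(X,\mu)$ separately, reducing the second (substantive) one to the earlier characterizations by exploiting that $m_Q\in Q$.

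First I would dispose of the inclusion $IL_{J,Q}(X,\mu)\subseteq A\cap B$, which follows directly from monotonicity of the supremum. If $f\in IL_{J,Q}(X,\mu)$, then $\sup_{p\in J,\,q\in Q}\|f\|_{L_{p,q}}<\infty$; freezing $q\in Q$ gives $\sup_{p\in J}\|f\|_{L_{p,q}}\leq\sup_{p\in J,\,q\in Q}\|f\|_{L_{p,q}}<\infty$, so $f\in A$, and symmetrically, freezing $p\in J$ yields $f\in B$. No earlier result is needed here, and in particular both $A$ and $B$ share the ambient domain $\bigcap_{p\in J,\,q\in Q}L_{p,q}(X,\mu)$ that already contains $f$.

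The real content is the reverse inclusion $A\cap B\subseteq IL_{J,Q}(X,\mu)$, and this is where the hypothesis $m_Q\in Q$ is decisive. Given $f\in A\cap B$, I would use membership in $A$ at the specific admissible index $q=m_Q\in Q$: the defining condition of $A$ then reads $\sup_{p\in J}\|f\|_{L_{p,m_Q}}=M_{m_Q}<\infty$, together with $f\in\bigcap_{p\in J}L_{p,m_Q}(X,\mu)$, which is exactly the statement that $f\in IL_{J,m_Q}(X,\mu)$ for the fixed second index $m_Q$. Proposition \ref{p7}, applied with $q=m_Q$, then places $f$ in $L_{m_J,m_Q}(X,\mu)\cap L_{M_J,m_Q}(X,\mu)$, and finally the identity \eqref{e9} of Theorem \ref{p9} recognizes this intersection as $IL_{J,Q}(X,\mu)$ itself, so $f\in IL_{J,Q}(X,\mu)$, completing the inclusion.

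The main obstacle is conceptual rather than computational: finiteness of all ``row'' suprema ($f\in A$) and all ``column'' suprema ($f\in B$) does not, a priori, force finiteness of the full double supremum $\sup_{p\in J,\,q\in Q}\|f\|_{L_{p,q}}$, since the one-parameter bounds $M_q$ and $M_p$ could grow without bound along the two index families. The trick that closes this gap is that $m_Q\in Q$ anchors the estimate at the smallest second index, where Proposition \ref{p7} already controls both extreme first indices $m_J$ and $M_J$ simultaneously, after which the structural equality in Theorem \ref{p9} reconstitutes the uniform two-parameter bound from this finite two-point information. One subtlety I would verify is the degenerate case $M_J=\infty$: there $L_{M_J,m_Q}(X,\mu)$ collapses to $\{0\}$ (as $m_Q<\infty$ because $m_Q\in Q\subseteq(0,\infty)$), so both sides reduce to $\{0\}$ and the chain of inclusions must be checked to remain valid, which it does since Proposition \ref{p7} forces $f=0$ in that situation.
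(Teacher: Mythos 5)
Your proof is correct, but it follows a different route from the paper's. The paper proves the substantive inclusion $A\cap B\subseteq IL_{J,Q}(X,\mu)$ by a direct quantitative estimate: for $f\in A\cap B$ and each $p\in J$, the inequality of \cite[Theorem 12]{aalr2} gives $\sup_{q\in Q}\|f\|_{L_{p,q}}\leq\max\{1,(m_Q/p)^{1/m_Q}\}\,\|f\|_{L_{p,m_Q}}$; since $p\geq m_J$ the constant is uniformly $\leq\max\{1,(m_Q/m_J)^{1/m_Q}\}$, so taking the supremum over $p\in J$ bounds the full double supremum by $\max\{1,(m_Q/m_J)^{1/m_Q}\}\,M_{m_Q}<\infty$, where finiteness of $M_{m_Q}$ is exactly where $m_Q\in Q$ enters. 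You instead argue set-theoretically: membership in $A$ at $q=m_Q$ gives $f\in IL_{J,m_Q}(X,\mu)$, Proposition \ref{p7} places $f$ in $L_{m_J,m_Q}(X,\mu)\cap L_{M_J,m_Q}(X,\mu)$, and the identity \eqref{e9} of Theorem \ref{p9} converts this into $f\in IL_{J,Q}(X,\mu)$. Both arguments anchor at the smallest second index and, notably, both use only the $A$ half of the hypothesis (so the proposition actually shows $IL_{J,Q}(X,\mu)=A=A\cap B$). What each buys: the paper's computation is self-contained, never involves $M_J$ (so the case $M_J=\infty$ needs no special treatment), and produces the explicit bound $\|f\|_{L_{J,Q}}\leq\max\{1,(m_Q/m_J)^{1/m_Q}\}M_{m_Q}$; your reduction avoids any estimate by recycling Theorem \ref{p9}, at the cost of inheriting the $L_{M_J,m_Q}$ term, which forces the separate (correctly handled) degenerate discussion when $M_J=\infty$, and of giving membership without an explicit constant tying $\|f\|_{L_{J,Q}}$ to $M_{m_Q}$.
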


\begin{proof}
It is clear that $IL_{J,Q}(X,\mu)\subseteq A\cap B$. For the
converse assume that $f\in A\cap B$. By \cite[Theorem 12]{aalr2}
implies that for each $p\in J$
$$
\sup_{q\in Q} \|f\|_{L_{p,q}} \leq \max\{1,
(\frac{m_Q}{p})^{\frac{1}{m_Q}}\} \|f\|_{L_{p,m_Q}}
$$
and so
\begin{eqnarray*}
\sup_{p\in J,~q\in Q} \|f\|_{L_{p,q}} &\leq&\max\{1,(\frac{m_Q}{m_J})^{\frac{1}{m_Q}}\}\sup_{p\in J}\|f\|_{L_{p,m_Q}}\\
&=&\max\{1,(\frac{m_Q}{m_J})^{\frac{1}{m_Q}}\}\;M_{m_Q}\\
&<&\infty.
\end{eqnarray*}
It follows that $f\in IL_{J,Q}(X,\mu)$.
\end{proof}

In the sequel, we investigate some previous results, for the
special Lorentz space $\ell_{p,q}\{E\}$, introduced in \cite{kat}.
In the further discussions, $E$ stands for a Banach space. Also
$K$ is the real or complex field and $I$ is the set of positive
integers. We first provide the required preliminaries, which
follow from \cite{kat}.

\begin{DEF}\em
For $1\leq p \leq \infty$, $1\leq q<\infty$ or $1\leq p<\infty$,
$q=\infty$ let $\ell_{p,q}\{E\}$ be the space of all $E$-valued
zero sequences $\{x_i\}$ such that
\begin{eqnarray*}
\|\{x_i\}\|_{p,q}= \left\lbrace
\begin{array}{c l}
\left(\sum_{i=1}^{\infty} i^{q/p -1} {\|x_{\phi(i)}\|}\;^q
\right)^{\frac{1}{q}} & \hbox{for $1\leq p\leq\infty, ~1\leq
q<\infty$}\\
\sup_i i^{\frac{1}{p}}\|x_{\phi(i)}\| & \hbox{for $1\leq
p<\infty,~ q=\infty$}.
\end{array}\right.
\end{eqnarray*}
is finite, where $\{\|x_{\phi(i)}\|\}$ is the non-increasing
rearrangement of $\{\|x_i\|\}$. If $E=K$, then $\ell_{p,q}\{K\}$
is denoted by $\ell_{p,q}$.
\end{DEF}
In particular, $\ell_{p,p}\{E\}$ coincides with $\ell_p\{E\}$ and
$\|.\|_{p,p}= \|.\|_p$; see \cite{mi}.\\

The following result will be used in the final result of this
paper. It is in fact \cite[Proposition 2]{kat}.

\begin{pro}\label{p22}
Let $E$ be a Banach space.
\begin{enumerate}
\item[(i)] If $1 \leq p < \infty , 1 \leq q<q_1 \leq \infty$, then
$\ell_{p,q}\{E\} \subseteq \ell_{p,q_1}\{E\}$ and for every
$\{x_i\} \in \ell_{p,q}\{E\}$
$$\|\{x_i\}\|_{p,q_1} \leq \left(\frac{q}{p}\right)^{\frac{1}{q}-\frac{1}{q_1}}
\|\{x_i\}\|_{p,q},$$ for $p<q$ and $$\|\{x_i\}\|_{p,q_1} \leq
\|\{x_i\}\|_{p,q},$$ for $p \geq q$. In fact
$$\|\{x_i\}\|_{p,q_1} \leq \max \{1,
\frac{q}{p}\}\|\{x_i\}\|_{p,q}.$$ \item[(ii)] Let eather $1\leq p
<p_1\leq\infty$, $1\leq q<\infty $ or $1 \leq p<p_1<\infty$,
$q=\infty$. Then
$$\ell_{p,q}\{E\} \subseteq\ell_{p_1,q}\{E\} $$
and for every $\{x_i\} \in \ell_{p,q}\{E\}$
$$\|\{x_i\} \|_{p_1,q}\leq\|\{x_i\}\|_{p,q}$$
\end{enumerate}
\end{pro}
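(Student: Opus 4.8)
The plan is to derive both inclusions from the quantitative norm estimates, since $\|\{x_i\}\|_{p,q}<\infty$ together with $\|\{x_i\}\|_{p,q_1}\le C\|\{x_i\}\|_{p,q}$ forces $\{x_i\}\in\ell_{p,q_1}\{E\}$. Throughout I write $a_i=\|x_{\phi(i)}\|$ for the non-increasing rearrangement, so that $a_1\ge a_2\ge\cdots\ge 0$ and, by definition of the $\ell_{p,\infty}$ norm, $i^{1/p}a_i\le\|\{x_i\}\|_{p,\infty}$ for every $i$.

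For part (i) I would first settle the endpoint $q_1=\infty$ and then reach general $q_1$ by a H\"older-type factorization. Since $a_i$ is non-increasing, for each $n$ we have $\|\{x_i\}\|_{p,q}^q\ge a_n^q\sum_{i=1}^n i^{q/p-1}$, so everything hinges on a lower bound for $\sum_{i=1}^n i^{q/p-1}$. When $p<q$ the exponent $q/p-1$ is positive and the summand is increasing, whence $\sum_{i=1}^n i^{q/p-1}\ge\int_0^n x^{q/p-1}\,dx=(p/q)\,n^{q/p}$; when $p\ge q$ the exponent is non-positive, so each of the $n$ terms is at least the last one and $\sum_{i=1}^n i^{q/p-1}\ge n\cdot n^{q/p-1}=n^{q/p}$. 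Taking $n$-th roots and suprema yields $\|\{x_i\}\|_{p,\infty}\le C_0\,\|\{x_i\}\|_{p,q}$ with $C_0=(q/p)^{1/q}$ when $p<q$ and $C_0=1$ when $p\ge q$, which is exactly the claimed estimate at $q_1=\infty$.

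For finite $q_1>q$ I would split $a_i^{q_1}=a_i^{q}\,a_i^{q_1-q}$ and bound the second factor by the endpoint estimate, using $a_i\le C_0\,i^{-1/p}\|\{x_i\}\|_{p,q}$. The powers of $i$ then combine as $i^{q_1/p-1}\cdot i^{-(q_1-q)/p}=i^{q/p-1}$, so summing gives $\|\{x_i\}\|_{p,q_1}^{q_1}\le\big(C_0\|\{x_i\}\|_{p,q}\big)^{q_1-q}\|\{x_i\}\|_{p,q}^{q}$, i.e. $\|\{x_i\}\|_{p,q_1}\le C_0^{(q_1-q)/q_1}\|\{x_i\}\|_{p,q}$. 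A short exponent computation shows $C_0^{(q_1-q)/q_1}=(q/p)^{1/q-1/q_1}$ when $p<q$ and equals $1$ when $p\ge q$, and this also covers $q_1=\infty$ as a limiting value. Finally, for the unified bound I would note that $0<1/q-1/q_1\le 1/q\le 1$ because $q\ge 1$; hence $(q/p)^{1/q-1/q_1}\le q/p$ when $q>p$, while the case $p\ge q$ gives the constant $1=\max\{1,q/p\}$, so in both cases $\|\{x_i\}\|_{p,q_1}\le\max\{1,q/p\}\|\{x_i\}\|_{p,q}$.

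Part (ii) is a direct monotonicity argument and requires no interpolation. Since $1\le p<p_1$ we have $q/p_1-1\le q/p-1$ (and $1/p_1\le 1/p$), and because $i\ge 1$ the map $a\mapsto i^{a}$ is non-decreasing; thus $i^{q/p_1-1}\le i^{q/p-1}$ for every $i$ when $q<\infty$, and $i^{1/p_1}\le i^{1/p}$ when $q=\infty$. Multiplying by $a_i^q$ (resp. $a_i$) and summing (resp. taking the supremum) gives $\|\{x_i\}\|_{p_1,q}\le\|\{x_i\}\|_{p,q}$ at once, and the inclusion follows. I expect the only delicate point in the whole argument to be the constant-tracking in part (i): verifying the two-case lower bound for $\sum_{i=1}^n i^{q/p-1}$ and checking the exponent identity $C_0^{(q_1-q)/q_1}=(q/p)^{1/q-1/q_1}$, both of which are elementary but must be done with care to match the stated constants.
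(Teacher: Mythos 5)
Your proof is correct, but it cannot be compared step-by-step with ``the paper's proof'' for a simple reason: the paper does not prove this proposition at all --- it is quoted verbatim as Proposition 2 of Kato's paper \cite{kat}, and the authors only cite it. So what you have done is supply a complete, self-contained proof of a result the paper imports from the literature. Your argument is the standard one for Lorentz-space nesting (it parallels, in the sequence setting, the proof of \cite[Proposition 1.4.10]{gr} for Lorentz function spaces): first the weak-type endpoint bound $i^{1/p}a_i\le C_0\|\{x_i\}\|_{p,q}$ obtained from the two-case lower bound $\sum_{i=1}^n i^{q/p-1}\ge \min\{1,p/q\}\,n^{q/p}$, then the factorization $a_i^{q_1}=a_i^q\,a_i^{q_1-q}$ to pass to finite $q_1$, with the exponent identity $(q_1-q)/(qq_1)=1/q-1/q_1$ recovering exactly the stated constants; part (ii) is, as you say, pure monotonicity of $i\mapsto i^{q/p-1}$ in $p$ since $i\ge 1$. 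All the estimates check out, including the unified bound $(q/p)^{1/q-1/q_1}\le q/p$ for $q>p$ because the exponent lies in $(0,1]$. What your route buys is self-containedness and explicit constant-tracking; what the paper's citation buys is brevity and deference to Kato's original treatment, which handles the vector-valued spaces $\ell_{p,q}\{E\}$ systematically. One cosmetic remark: to conclude the set inclusion $\ell_{p,q}\{E\}\subseteq\ell_{p,q_1}\{E\}$ you should note (as is immediate) that membership also requires $\{x_i\}$ to be a zero sequence, a property trivially inherited, so the norm bound is indeed all that needs proving.
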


Now for $J,Q\subseteq [1,\infty)$ let
\begin{center}
$IL_{J,Q}=\{\{x_i\}\in\bigcap_{p\in J,q\in Q}\ell_{p,q}:\;\;
\|\{x_i\}\|_{J,Q}=\sup_{p\in J,q\in Q}
\|\{x_i\}\|_{{p,q}}<\infty\}$.
\end{center}
We finish this work with the following result, which determines
the structure of $IL_{J,Q}$.
\begin{pro}\label{p15}
Let $J,Q\subseteq [1,\infty)$. Then $IL_{J,Q}=\ell_{m_J,m_Q}$.
\end{pro}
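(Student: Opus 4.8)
The plan is to prove the set equality $IL_{J,Q}=\ell_{m_J,m_Q}$ by establishing the two inclusions separately, reducing everything to the monotonicity statements in Proposition \ref{p22}. The conceptual point to keep in mind is that, unlike the function-space situation of Theorem \ref{p9} (where the answer involved both $m_J$ and $M_J$), in the sequence spaces \emph{both} indices behave monotonically: by Proposition \ref{p22}(ii) the space $\ell_{p,q}$ grows as $p$ increases at fixed $q$, and by Proposition \ref{p22}(i) it grows as $q$ increases at fixed $p$. Hence the smallest space in the family $\{\ell_{p,q}:p\in J,\,q\in Q\}$ sits at the corner $(m_J,m_Q)$, and only the two infima enter. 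A further feature I will exploit throughout is that the non-increasing rearrangement $\{|x_{\phi(i)}|\}$ of $\{|x_i|\}$ does not depend on $p$ or $q$, so it is a fixed non-negative non-increasing sequence $a_i:=|x_{\phi(i)}|$ for every choice of indices.

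For the inclusion $\ell_{m_J,m_Q}\subseteq IL_{J,Q}$ I would fix $\{x_i\}\in\ell_{m_J,m_Q}$ and bound $\|\{x_i\}\|_{p,q}$ uniformly over $p\in J,\,q\in Q$ by chaining the two parts of Proposition \ref{p22}. First, since $m_J\le p$, part (ii) (applied at second index $m_Q$, which lies in $[1,\infty)$) gives
$$\|\{x_i\}\|_{p,m_Q}\le\|\{x_i\}\|_{m_J,m_Q}$$
(with equality when $p=m_J$). Then, since $m_Q\le q$, part (i) at fixed first index $p$ gives
$$\|\{x_i\}\|_{p,q}\le\max\{1,m_Q/p\}\,\|\{x_i\}\|_{p,m_Q}.$$
Combining these and using $p\ge m_J$ to replace $\max\{1,m_Q/p\}$ by the larger constant $\max\{1,m_Q/m_J\}$ yields $\|\{x_i\}\|_{p,q}\le\max\{1,m_Q/m_J\}\,\|\{x_i\}\|_{m_J,m_Q}$, a bound independent of $p,q$. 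Taking the supremum over $p\in J,\,q\in Q$ shows $\sup_{p,q}\|\{x_i\}\|_{p,q}<\infty$, so $\{x_i\}\in IL_{J,Q}$.

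The reverse inclusion $IL_{J,Q}\subseteq\ell_{m_J,m_Q}$ is where the only real work lies, and it is the step I expect to be the main obstacle, precisely because $m_J$ and $m_Q$ need not belong to $J$ and $Q$: when neither infimum is attained, $\|\cdot\|_{m_J,m_Q}$ is not itself one of the norms being controlled, so I must pass to a limit, mirroring the Fatou arguments in Proposition \ref{p7} and Theorem \ref{p9}. Write $M:=\sup_{p\in J,\,q\in Q}\|\{x_i\}\|_{p,q}<\infty$ and choose sequences $p_n\in J$ with $p_n\to m_J$ and $q_n\in Q$ with $q_n\to m_Q$. Because the rearrangement $a_i$ is fixed, for each fixed $i$ the term $i^{q_n/p_n-1}a_i^{q_n}$ converges to $i^{m_Q/m_J-1}a_i^{m_Q}$. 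Applying Fatou's lemma for the counting measure to the non-negative quantities $i^{q_n/p_n-1}a_i^{q_n}$ gives
$$\|\{x_i\}\|_{m_J,m_Q}^{m_Q}=\sum_i i^{m_Q/m_J-1}a_i^{m_Q}\le\liminf_n\sum_i i^{q_n/p_n-1}a_i^{q_n}=\liminf_n\|\{x_i\}\|_{p_n,q_n}^{q_n}.$$
Since $\|\{x_i\}\|_{p_n,q_n}\le M$ and $t\mapsto M^{t}$ is continuous, the right-hand side is at most $\lim_n M^{q_n}=M^{m_Q}$, whence $\|\{x_i\}\|_{m_J,m_Q}\le M<\infty$ and $\{x_i\}\in\ell_{m_J,m_Q}$.

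Putting the two inclusions together proves $IL_{J,Q}=\ell_{m_J,m_Q}$, and as a by-product the computations above deliver the two-sided estimate
$$\|\{x_i\}\|_{m_J,m_Q}\le\sup_{p\in J,\,q\in Q}\|\{x_i\}\|_{p,q}\le\max\{1,m_Q/m_J\}\,\|\{x_i\}\|_{m_J,m_Q},$$
the discrete analogue of the norm equivalences in Theorem \ref{p9} and Proposition \ref{p8}. The few routine points to verify are that $m_J,m_Q\in[1,\infty)$ (so that all applications of Proposition \ref{p22} are legitimate and $m_Q$ is an admissible second index), the trivial handling of the degenerate case $M=0$, and the elementary observation that one may select the approximating sequences $p_n,q_n$ independently and pair them.
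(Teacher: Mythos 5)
Your proof is correct, and while it rests on the same two pillars as the paper's own proof --- Proposition \ref{p22} for the inclusion $\ell_{m_J,m_Q}\subseteq IL_{J,Q}$ (with the same constant $\max\{1,m_Q/m_J\}$) and Fatou's lemma for the reverse inclusion --- your execution of the Fatou direction is genuinely different. The paper proceeds by iterated one-variable limits: it first proves $IL_{p,Q}=\ell_{p,m_Q}$ for each fixed $p$ (Fatou in the second index alone, mirroring \cite[Theorem 12]{aalr2}), then proves $IL_{J,q}\subseteq\ell_{m_J,q}$ for each fixed $q$ (Fatou in the first index alone, mirroring Proposition \ref{p7}), and finally chains these through the intermediate space $IL_{m_J,Q}$ to conclude $IL_{J,Q}\subseteq\ell_{m_J,m_Q}$. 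You instead choose sequences $p_n\to m_J$ in $J$ and $q_n\to m_Q$ in $Q$ simultaneously and apply Fatou's lemma once to the terms $i^{q_n/p_n-1}a_i^{q_n}$, which indeed converge pointwise in $i$ to $i^{m_Q/m_J-1}a_i^{m_Q}$ (also when $a_i=0$, since all exponents $q_n$ are positive), and you close the argument with the observation that $\liminf_n\|\{x_i\}\|_{p_n,q_n}^{q_n}\le\liminf_n M^{q_n}=M^{m_Q}$. This diagonal version is shorter and cleaner: it avoids the auxiliary spaces $IL_{J,q}$ and $IL_{m_J,Q}$ entirely, and it sidesteps the bookkeeping in the paper's final assembly step. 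What the paper's step-by-step route buys in exchange is the pair of intermediate results $IL_{p,Q}=\ell_{p,m_Q}$ and $IL_{J,q}\subseteq\ell_{m_J,q}$, which are of independent interest as discrete analogues of the function-space statements earlier in the paper; your argument recovers the final theorem and the two-sided norm estimate without them.
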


\begin{proof}
Some similar arguments to \cite[Theorem 12]{aalr2} implies that
$IL_{p,Q}=\ell_{p,m_Q}$. Indeed, by Proposition \ref{p22} for each
$q\in Q$, $\ell_{p,m_Q} \subseteq \ell_{p,q}$. Also for each
$\{x_i\}\in \ell_{p,m_Q}$,
$$\|\{x_i\}\|_{p,q} \leq \max\{ 1, \frac{m_Q}{p}\} \|\{x_i\}\|_{p,m_Q}.$$
It follows that $\{x_i\}\in IL_{p,Q}$ and
$$\|\{x_i\}\|_{p,Q} \leq \max \{1,\frac{m_Q}{p}\}\|\{x_i\}\|_{p,m_Q}.$$
Thus $\ell_{p,m_Q}\subseteq IL_{p,Q}$. The reverse of this
inclusion is clear whenever $m_Q\in Q$. Now let $m_Q\notin Q$.
Thus there is a sequence $(y_n)_{n\in\Bbb N}$ in $Q$, converging
to $m_Q$. For each $\{x_i\}\in IL_{p,Q}$, Fatou's lemma implies
that
\begin{eqnarray*}
\|\{x_i\}\|_{p,m_Q}^{m_Q} &=& \sum_{i=1}^{\infty} i^{\frac{m_Q}{p}-1} \|x_{\Phi(i)}\|^{m_Q}\\
&=& \sum_{i=1}^{\infty} \lim\inf_n \left( i^{\frac{y_n}{p}-1} \|x_{\Phi(i)}\|^{y_n} \right)\\
& \leq &\lim\inf_n \sum_{i=1}^{\infty} \left(i^{\frac{y_n}{p}-1} \|x_{\Phi(i)}\|^{y_n} \right)\\
&=& \lim\inf_n \|\{x_i\}\|_{p,y_n}^{y_n} \leq \lim\inf_n \|\{x_i\}\|_{p,Q}^{y_n}\\
&=& \|\{x_i\}\|_{p,Q}^{m_Q},
\end{eqnarray*}
which implies $\{x_i\}\in\ell_{p,m_Q}$. Consequently
$IL_{p,Q}=\ell_{p,m_Q}$. In the sequel, we show that
$IL_{J,q}\subseteq\ell_{m_J,q}$, for each $q\in Q$. Suppose that
$(y_n)$ be a sequences in $J$ such that $\lim_n x_n=m_J$ and
$\{x_i\}\in IL_{J,q}$ Then by Fatou's lemma, we have
\begin{eqnarray*}
\|\{x_i\}\|_{m_J,q}^q &=& \sum_{i=1}^{\infty} \left(i^{\frac{q}{m_J}-1} \|x_{\Phi(i)}\|^q\right)\\
&=&\sum_{i=1}^{\infty}\lim\inf_n \left(i^{\frac{q}{y_n}-1} \|x_{\Phi(i)}\|^q\right)\\
&\leq& \lim\inf_n \sum_{i=1}^{\infty}\left(i^{\frac{q}{y_n}-1}
\|x_{\Phi(i)}\|^q\right)\\
&=&\lim\inf_n \|\{x_i\}\|_{y_n,q}^q\\
&\leq&\|\{x_i\}\|_{J,q}^q\\
&<&\infty.
\end{eqnarray*}
Hence $IL_{J,q}\subseteq\ell_{m_J,q}$. Now suppose that $\{x_i\}
\in IL_{J,Q}$. Then for each $q\in Q$, $ \{x_i\}\in IL_{J,q}$ and
so $\{x_i\}\in\ell_{m_J,q}$. On the other hand by the above
inequalities, for each $1\leq q<\infty$, we have
$\|\{x_i\}\|_{m_J,q} \leq \|\{x_i\}\|_{J,q}$. So $\{x_i\}\in
IL_{m_J,Q} \subseteq \ell_{m_J,m_Q}$, which implies $IL_{J,Q}
\subseteq \ell_{m_J,m_Q}$. Also by Proposition \ref{p22}, for each
$p\geq m_J$ and $q\geq m_Q$ we have $\ell_{m_J,m_Q} \subseteq
\ell_{m_J,q} \subseteq\ell_{p,q}$. Consequently
$$\ell_{m_J,m_Q} \subseteq \bigcap_{p\in J , q\in Q} \ell_{p,q}.$$
Moreover for each $\{x_i\}\in \ell_{p,q}$,
$$
\sup_{p\in J, q\in Q} \|\{x_i\}\|_{p,q} \leq \sup_{q\in Q}
\|\{x_i\}\|_{m_J,q} \leq \max \{ 1, \frac{m_Q}{m_J}\}
\|\{x_i\}\|_{m_J,m_Q}.
$$
It follows that $$\ell_{m_J,m_Q}\subseteq IL_{J,Q} \subseteq
\ell_{m_J,m_Q}.
$$
Therefore $IL_{J,Q}=\ell_{m_J,m_Q}$, as claimed.
\end{proof}

{\bf Acknowledgment.} This research was partially supported by the
Banach algebra Center of Excellence for Mathematics, University of
Isfahan.

\footnotesize

\vspace{9mm}

{\footnotesize \noindent
 F. Abtahi\\
  Department of Mathematics,
   University of Isfahan,
    Isfahan, Iran\\
     f.abtahi@sci.ui.ac.ir\\

\noindent
 H. G. Amini\\
  Department of Mathematics,
   University of Isfahan,
    Isfahan, Iran\\

\noindent
 A. Lotfi\\
  Department of Mathematics,
   University of Isfahan,
    Isfahan, Iran\\
     hali-lotfi@yahoo.com\\

\noindent
 A. Rejali\\
  Department of Mathematics,
   University of Isfahan,
    Isfahan, Iran\\
     rejali@sci.ui.ac.ir\\

\end{document}